\newtheorem{theorem}{Theorem}[section]
\newtheorem{lemma}[theorem]{Lemma}
\theoremstyle{definition}
\newtheorem{example}[theorem]{Example}
\theoremstyle{remark}
\numberwithin{equation}{section}
\newcommand\nutwid{\overset {\text{\lower 3pt\hbox{$\sim$}}}\nu}
\newcommand\leg[2]{\genfrac{(}{)}{}{}{#1}{#2}} 
\newcommand{\beqs}{\begin{equation*}}
\newcommand{\eeqs}{\end{equation*}}
\newcommand{\beq}{\begin{equation}}
\newcommand{\eeq}{\end{equation}}
\begin{document}

\title[Congruence modulo 4 for Andrews' integer partition]{Congruence modulo 4 for Andrews' integer partition with even parts below odd parts}

\author{Dandan Chen}
\address{Department of Mathematics, Shanghai University, Shanghai, People’s Republic of China}
\email{mathcdd@shu.edu.cn}
\author{Rong Chen}
\address{School of Mathematical Sciences, Tongji University, Shanghai, People’s Republic of China}
\email{rongchen20@tongji.edu.cn}

\subjclass[2010]{11E20, 11F33, 11E41}

\keywords{Ternary quadratic form, Ramanujan-type congruences, Class number.}

\begin{abstract}
We find and prove a class of congruences modulo 4 for Andrews' partition with certain ternary quadratic form. We also discuss distribution of $\overline{\mathcal{EO}}(n)$ and  further prove that $\overline{\mathcal{EO}}(n)\equiv0\pmod4$ for almost all $n$.  This study was inspired by similar congruences modulo 4 in the work by the second author and Garvan.
\end{abstract}

\maketitle
\section{Introduction}
Let $p(n)$ be the number of unrestricted partitions of $n$. Ramanujan discovered and later proved that
\begin{align*}
p(5n+4)\equiv0\pmod5, ~p(7n+5)\equiv0\pmod7 ~~~\text{and}~~ p(11n+6)\equiv0\pmod{11}.
\end{align*}
Congruences like this are called Ramanujan-type congruences. In a recent paper, Andrews \cite{Andrews-Ann-18} studied the partition function $\mathcal{EO}(n)$ which counts the number of partitions of $n$ where every even part is less than each odd part. He denoted by $\overline{\mathcal{EO}}(n)$, the number of
partitions counted by $\mathcal{EO}(n)$ in which \textit{only} the largest even part appears an odd number of times. For example, $\mathcal{EO}(8)=12$ with the relevant partitions being
$8, 6+2, 7+1, 4+4, 4+2+2, 5+3, 5+1+1+1, 2+2+2+2, 3+3+2, 3+3+1+1, 3+1+1+1+1+1, 1+1+1+1+1+1+1+1+1$; and $\overline{\mathcal{EO}}(8)=5$, with the relevant partitions being $8, 4+2+2, 3+3+2, 3+3+1+1, 1+1+1+1+1+1+1+1$.

Andrews proved that the partition function $\overline{\mathcal{EO}}(n)$ has the following generating function \cite[Eq. (3.2)]{Andrews-Ann-18}:
\begin{align}\label{defn-EO-bar}
\sum_{n=0}^{\infty}\overline{\mathcal{EO}}(n)q^n
=\frac{J_4^3}{J_2^2},
\end{align}
where $J_k=\prod_{n=1}^{\infty}(1-q^{kn})$ and $q=\exp{(2\pi i\tau)}$.
In the same paper, he proposed to undertake a more extensive investigation of the properties of $\overline{\mathcal{EO}}(n)$. In \cite{Ray+Barman-NT-20}, Ray and Barman  used the theory of Hecke eigenforms to establish  two infinite families of congruences for $\overline{\mathcal{EO}}(n)$ modulo $2$ and $8$, respectively. 	

In this paper we consider congruences for $\overline{\mathcal{EO}}(n)$ modulo $4$. We use the same method in the second author and Garvan \cite{Rong+Garvan-BAMS} to establish the following stronger theorem, which can immediately deduce the  result in \cite[Theorem 1.1]{Ray+Barman-NT-20}.

\begin{theorem}\label{thm-EO-bar-mod4-0}
Let $k$, $n$ be nonnegative integers. For each $i$ with $1\leq i\leq k+1$, if $p_i\geq 5$ is prime, then for any $j\not\equiv0 \pmod{p_{k+1}}$
\begin{align}
\label{main}
\overline{\mathcal{EO}}\left(p_1^2\cdots p_{k+1}^2n+
\frac{p_1^2\cdots p_{k}^2p_{k+1}(3j+p_{k+1})-1}{3}\right)\equiv0\pmod4,
\end{align}
where $p_{k+1}$ and $j$ satisfy one of the following addtions\\
$\mathrm{(i)}$ $p_{k+1}\not\equiv 7,13\pmod{24}$,\\
$\mathrm{(ii)}$ $\left(\frac{3j}{p_{k+1}}\right)=-1$.
\end{theorem}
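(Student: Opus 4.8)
\emph{Step 1 (reduction modulo $4$).} Following the method of \cite{Rong+Garvan-BAMS}, the idea is to replace \eqref{defn-EO-bar} modulo $4$ by a character-twisted theta series of a ternary quadratic form. Put $\varphi(-q)=\sum_{n\in\Z}(-1)^nq^{n^2}$; then $\varphi(-q)=J_1^2/J_2$, so $\varphi(-q^2)=J_2^2/J_4$ and $\varphi(-q^4)=J_4^2/J_8$. Since $\varphi(-q^2)=1+2g$ with $g\in\Z[[q]]$, we have $\varphi(-q^2)^2\equiv1\pmod4$, hence $\varphi(-q^2)^{-1}\equiv\varphi(-q^2)\pmod4$, and therefore
\[
\sum_{n\geq0}\overline{\mathcal{EO}}(n)q^n=\frac{J_4^3}{J_2^2}=\frac{J_4^2}{\varphi(-q^2)}\equiv J_4^2\,\varphi(-q^2)=J_2^2J_4\pmod4 .
\]
Replacing $q$ by $q^3$ and multiplying by $q$, this says $\sum_n\overline{\mathcal{EO}}(n)q^{3n+1}\equiv qJ_6^2J_{12}=\eta(6\tau)^2\eta(12\tau)\pmod4$. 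This step is routine $q$-series manipulation.

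\emph{Step 2 (the ternary form).} Write $J_2^2J_4=J_8\,\varphi(-q^2)\,\varphi(-q^4)$ and expand each factor: $J_8=\sum_{a\geq1}\left(\frac{12}{a}\right)q^{(a^2-1)/3}$ by Euler's pentagonal number theorem (the nonzero terms having $a$ coprime to $6$), and $\varphi(-q^{2r})=\sum_{m\in\Z}(-1)^mq^{2rm^2}$. Multiplying out gives, for every $n$,
\[
\overline{\mathcal{EO}}(n)\equiv\sum_{\substack{a\geq1,\ m,k\in\Z\\ a^2+6m^2+12k^2=3n+1}}\left(\frac{12}{a}\right)(-1)^{m+k}\pmod4 ,
\]
so $\overline{\mathcal{EO}}(n)\bmod4$ is a lightly twisted count of the representations of $3n+1$ by the ternary form $Q(x,y,z)=x^2+6y^2+12z^2$. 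Since $a^2\equiv1\pmod{24}$, the congruence $6m^2+12k^2\equiv3n\pmod{24}$ forces the parities of $m$ and $k$, so the sign $(-1)^{m+k}$ is a constant $\varepsilon_0\in\{\pm1\}$ depending only on $3n+1\bmod24$, and $\overline{\mathcal{EO}}(n)\equiv\varepsilon_0\sum_{Q(x,y,z)=3n+1,\,x\geq1}\left(\frac{12}{x}\right)\pmod4$.

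\emph{Step 3 (the divisibility).} Let $M=3N+1$ with $N$ the argument in \eqref{main}. A short computation gives $M=(p_1\cdots p_k)^2\,p_{k+1}\bigl(p_{k+1}(3n+1)+3j\bigr)$ with $p_{k+1}(3n+1)+3j\equiv3j\not\equiv0\pmod{p_{k+1}}$, so $p_{k+1}$ occurs in $M$ to an odd power; in particular $M$ is not a square, so the term $m=k=0$ contributes nothing to the sum above. I would then use that the weight-$\tfrac32$ form $\eta(6\tau)^2\eta(12\tau)$ (equivalently, the twisted ternary theta series) has Fourier coefficients governed by half-integral-weight Hecke operators / a Shimura lift / class numbers, so that the coefficient at $M$ is multiplicative up to twist and carries a local factor at $p_{k+1}$ of the form $1+\chi(p_{k+1})+\cdots+\chi(p_{k+1})^{2b+1}$. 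Hypothesis (i), $p_{k+1}\not\equiv7,13\pmod{24}$, says exactly that $p_{k+1}$ falls in a residue class mod $24$ where the pertinent genus/Kronecker character is $-1$, so this factor vanishes; hypothesis (ii), $\left(\frac{3j}{p_{k+1}}\right)=-1$, is a quadratic-nonresidue condition on the cofactor $p_{k+1}(3n+1)+3j$ which forces those representations to pair off and supply an extra factor of $2$. In either case, together with the factor $2$ that is always present (this is, in essence, the mod-$2$ congruence of \cite{Ray+Barman-NT-20}, coming from Gauss's genus theory), we obtain $\overline{\mathcal{EO}}(N)\equiv0\pmod4$.

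\emph{Main obstacle.} The heart of the argument is Step 3: extracting divisibility by $4$, rather than merely by $2$, from a \emph{twisted} ternary representation count. The character $\left(\frac{12}{x}\right)$ prevents one from appealing to positivity of representation numbers, so one must either pin down the precise class-number formula for these coefficients and analyze it modulo $4$ --- using Gauss's genus theory for one factor of $2$ and the finer criteria for higher $2$-divisibility of class numbers --- or construct an explicit sign-preserving involution on the set of representations of $M$ by $Q$, built from the prime $p_{k+1}$, all of whose orbits have size divisible by $4$. Treating hypotheses (i) and (ii) uniformly, and the bookkeeping when the primes $p_1,\dots,p_{k+1}$ are not distinct, are the delicate points; Steps $1$ and $2$ are mechanical.
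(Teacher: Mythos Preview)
Your Steps~1 and~2 are correct and essentially match the paper's reduction: the paper also arrives at $\sum_n\overline{\mathcal{EO}}(n)q^n\equiv J_2^2J_4\pmod4$ and then interprets the coefficients via a ternary form. One difference worth noting is that the paper \emph{removes} the twist at this stage: writing $J_1^2J_2=\bigl(\sum(-1)^nq^{n^2}\bigr)\bigl(\sum(-1)^nq^{3n^2-n}\bigr)^2$, a short congruence argument (its Lemma~4.1) shows this is $\equiv\bigl(\sum q^{n^2}\bigr)\bigl(\sum q^{3n^2-n}\bigr)^2\pmod4$, so one obtains $\overline{\mathcal{EO}}(n)\equiv A(6n+2)\pmod4$ with $4A(m)=r_{1,1,3}(m)$ \emph{exactly} for $m\equiv2\pmod{12}$. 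This untwisted identity is what makes the mod~$4$ analysis tractable via class numbers.

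Your Step~3 is where the genuine gap lies, and you essentially acknowledge this. The heuristic you offer --- a local Euler factor $1+\chi(p_{k+1})+\cdots+\chi(p_{k+1})^{2b+1}$ that vanishes when a single Kronecker character is $-1$ --- is not the actual mechanism, and weight-$\tfrac32$ coefficients are not multiplicative in that simple way. The paper instead proves a complete structural characterization (its Theorem~3.7): using $2A(m)=h(-3m)$ for squarefree $m\equiv2\pmod{12}$, the formula $h(-6p)\equiv4\pmod8\iff p\equiv5,7\pmod8$, Gauss genus theory for extra factors of $2$, and a Hecke-type recursion $A(p^2m)+\bigl(\frac{-3m}{p}\bigr)A(m)+pA(m/p^2)=(p+1)A(m)$ to pass from squarefree to general $m$, one shows that $A(m)\equiv2\pmod4$ \emph{iff} $m=2p^{4\alpha+1}\ell^2$ with $p\equiv5,7\pmod8$ prime and $(\ell,6p)=1$. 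The proof of the theorem is then a short contradiction argument: your $M=3N+1$ (or the paper's $6N+2=2M$) has $p_{k+1}$ to an odd power, so if $A(2M)\equiv2\pmod4$ then necessarily $p=p_{k+1}$, forcing both $p_{k+1}\equiv5,7\pmod8$ \emph{and} (from $2M\equiv2\pmod{12}$) $p_{k+1}\equiv1\pmod6$, i.e.\ $p_{k+1}\equiv7,13\pmod{24}$ --- which disposes of case~(i). For case~(ii), the same structure forces the cofactor $p_{k+1}(3n+1)+3j$ to be a perfect square coprime to $p_{k+1}$, hence $3j$ is a residue mod $p_{k+1}$, contradicting $\bigl(\frac{3j}{p_{k+1}}\bigr)=-1$.

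So the missing idea is precisely the untwisting step together with the exact class-number input; your ``pairing off'' and ``character $=-1$'' sketches do not by themselves yield the needed divisibility by $4$.
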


\begin{example} Let $k=0$ and $p=5,7$. We have
\begin{align*}
&\overline{\mathcal{EO}}(25n+3)\equiv0\pmod4,~~
\overline{\mathcal{EO}}(25n+13)\equiv0\pmod4,\\
&\overline{\mathcal{EO}}(25n+18)\equiv0\pmod4,~~
\overline{\mathcal{EO}}(25n+23)\equiv0\pmod4,
\end{align*}
and
\begin{align*}
\overline{\mathcal{EO}}(49n+23)\equiv0\pmod4,~~
\overline{\mathcal{EO}}(49n+30)\equiv0\pmod4,~~
\overline{\mathcal{EO}}(49n+44)\equiv0\pmod4.
\end{align*}
\end{example}

To prove this, we will relate $\overline{\mathcal{EO}}(n)$ to certain ternary quadratic form $r_{1,1,3}(n)$, the numbers of representation of $n=x^2+y^2+3z^2$ with $x,y,z\in \mathbb{Z}$. Further we have

\begin{theorem}\label{thm-lim-mod4-0}
$$
\lim_{N\rightarrow \infty}\frac{\#\{n\leq N :\overline{\mathcal{EO}}(n)\equiv 0\pmod{4}\}}{N}=1.
$$
\end{theorem}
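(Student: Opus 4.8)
The plan is to reduce the asymptotic density statement to a fact about the ternary quadratic form $r_{1,1,3}(n)$ and then invoke known results on the density of integers represented by a ternary form. First I would establish, via the generating-function identity \eqref{defn-EO-bar}, an explicit congruence linking $\overline{\mathcal{EO}}(n)$ modulo $4$ to $r_{1,1,3}$. Working $2$-adically with $J_4^3/J_2^2$, one expands $J_2^{-2}$ and rewrites the whole series in terms of theta functions; using $\psi(q)=\sum_{n\geq 0}q^{n(n+1)/2}=J_2^2/J_1$ and standard dissections, the product $J_4^3/J_2^2$ should collapse modulo $4$ (or modulo a small power of $2$) to a linear combination of $\sum q^{x^2+y^2+3z^2}$-type theta series, so that
\begin{align*}
\overline{\mathcal{EO}}(n)\equiv c\cdot r_{1,1,3}(\alpha n+\beta)\pmod 4
\end{align*}
for suitable fixed constants $c,\alpha,\beta$. (The precise form of this reduction is exactly the content the authors use to prove Theorem 1.1, so I would lift it from the body of the paper.) The upshot is that $\overline{\mathcal{EO}}(n)\not\equiv 0\pmod 4$ forces $\alpha n+\beta$ to be represented by $x^2+y^2+3z^2$.

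Next I would quote the classical fact that the ternary form $x^2+y^2+3z^2$ fails to be \emph{regular} in the strong sense needed here: the set of integers \emph{not} represented by a positive definite ternary quadratic form has positive density only in the trivial sense, but more to the point, by the work of Duke, Duke--Schulze-Pillot and Bhargava--Hanke (or, for this specific classical form, the explicit description of its representation numbers in terms of class numbers of imaginary quadratic orders, which is already visible in the paper's reference list), the integers represented by $x^2+y^2+3z^2$ in a fixed arithmetic progression $\alpha n+\beta$ can be controlled. The key analytic input is that for a ternary form the theta series is a weight-$3/2$ modular form whose Eisenstein part dominates; away from the finitely many square-classes of ``spinor exceptions'' every sufficiently large integer in the relevant genus-admissible progressions is represented, so $r_{1,1,3}(m)>0$ for a density-$1$ set of $m$. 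But that is the \emph{wrong} direction: I actually need $r_{1,1,3}(\alpha n+\beta)\equiv 0 \pmod 4$ for almost all $n$, i.e. I need the representation number, not merely its positivity, to be divisible by $4$.

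So the real argument should run through Theorem 1.2 itself together with a counting of the arithmetic progressions it covers. The plan is: by Theorem 1.2, for every prime $p\geq 5$ with $p\not\equiv 7,13\pmod{24}$ we get a full residue class structure modulo $p^2$ on which $\overline{\mathcal{EO}}\equiv 0\pmod 4$; the complementary set (those $n$ with $\alpha n+\beta$ a nonzero square times a unit mod $p$, roughly) has density $\to 0$ as we let $p$ range over more and more such primes, by an inclusion–exclusion / sieve estimate of Landau type, precisely because the set of $m$ that are \emph{not} of the form $p^2\cdot(\text{stuff})$ for any such $p$ has density zero (an integer avoiding being divisible in the right way by \emph{all} primes $p\equiv \pm1\!\!\pmod{\text{something}}$, $p\geq 5$, is forced into a density-zero set — this is the same phenomenon that makes $\{n : n \text{ is squarefree away from small primes}\}\cap(\text{bad class})$ thin). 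Concretely I would fix a large $x$, take the primes $5=q_1<q_2<\cdots<q_t$ below $x$ satisfying condition (i), and show
\begin{align*}
\#\{n\leq N : \overline{\mathcal{EO}}(n)\not\equiv 0\pmod 4\}
\ \leq\ N\prod_{i=1}^{t}\Bigl(1-\tfrac{c_i}{q_i^2}\Bigr)+o_N(N),
\end{align*}
with $c_i$ a positive constant (the number of residues mod $q_i^2$ produced by \eqref{main}), and then let $x\to\infty$: since $\sum 1/q_i^2$ — wait, that converges, so a bare product is not enough.

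The main obstacle, and the step I would spend the most care on, is exactly this last point: a single layer of congruences modulo $p_i^2$ for a sparse set of primes only removes a \emph{positive} proportion, not a full-density set, because $\sum_p p^{-2}<\infty$. To get density $1$ one must exploit that Theorem 1.2 is really a statement about \emph{all} $n$ whose associated argument $\alpha n+\beta$ is \emph{not} $3$ times a square-class unit at some prime — equivalently, one shows directly that $r_{1,1,3}(m)\equiv 0\pmod 4$ unless $m$ lies in one of finitely many square-classes (coming from the genus of the form and the behaviour at $2$ and $3$), and the union of finitely many square-classes $\{d t^2\}$ has density $0$. Thus the correct route is: (a) prove from the theta decomposition that $r_{1,1,3}(m)$ is divisible by $4$ whenever $m$ has a prime factor $p\geq 5$ with $p \parallel m$ and $\leg{-m/p}{p}\ne$ the exceptional value — a multiplicativity/class-number-parity statement, provable by Hecke-operator or genus-theory arguments of the kind in \cite{Rong+Garvan-BAMS}; (b) observe that the integers $m$ failing this — roughly, $m$ of the form (square)$\times$(product of small primes and ``bad'' primes) — form a set of density zero by a standard Mertens/Landau sieve; (c) transport this back through $m=\alpha n+\beta$ to conclude. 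Verifying (a) — the exact parity of the relevant representation numbers, i.e.\ showing the non-Eisenstein and cusp-form contributions conspire to keep $r_{1,1,3}$ even and indeed $\equiv 0\pmod 4$ off the exceptional square-classes — is where the genuine work lies; (b) and (c) are then routine analytic number theory.
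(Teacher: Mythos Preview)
Your final paragraph lands on the strategy the paper actually uses: characterize exactly which $n$ give $\overline{\mathcal{EO}}(n)\not\equiv 0\pmod 4$, show those $n$ form a density-zero set of square-class type, and conclude. The two earlier attempts (positivity of ternary representation, and sieving via Theorem~\ref{thm-EO-bar-mod4-0} modulo~$p^2$) are indeed dead ends for the reasons you give, so your self-corrections are sound.

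Two details need fixing before the plan works. First, the link to $r_{1,1,3}$ carries a built-in factor of $4$: the paper proves $\overline{\mathcal{EO}}(n)\equiv A(6n+2)\pmod 4$ (Theorem~\ref{thm-EO-A-mod4}) where $4A(m)=r_{1,1,3}(m)$ on $m\equiv 2\pmod{12}$ (equation~\eqref{A-r113-relation}). So your step~(a) as phrased, ``$r_{1,1,3}(m)$ divisible by $4$'', is automatically true and proves nothing; what you need is $A(m)\equiv 0\pmod 4$, i.e.\ $r_{1,1,3}(m)\equiv 0\pmod{16}$. Second, your proposed shape for the exceptional set (one prime $p\parallel m$ with a Legendre obstruction) is not the correct characterization. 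The paper's Theorem~\ref{thm-A-parity}, proved via the class-number identity $r_{1,1,3}(n)=2h(-3n)$ for squarefree $n\equiv 2\pmod{12}$, the genus-theory $2$-rank formula $h(-n)=2^{t-1}k$, and the Cooper--Lam recursion to pass to non-squarefree arguments, gives: $A(m)$ is odd iff $m=2\ell^2$ with $(\ell,6)=1$, and $A(m)\equiv 2\pmod 4$ iff $m=2p^{4\alpha+1}\ell^2$ with $p\equiv 5,7\pmod 8$ prime and $(\ell,6p)=1$. The first set has $O(\sqrt N)$ elements up to $N$; for the second the paper quotes an explicit asymptotic $\sim cN/\log N$ (Lemma~\ref{lem-eq-gamma}), though for Theorem~\ref{thm-lim-mod4-0} any $o(N)$ bound, e.g.\ your ``standard sieve'', would suffice once~(a) is stated correctly.
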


The paper is organized as follows. In Section \ref{sec-pre} we introduce some notations and collect some necessary results. In Section \ref{sec-TQF} we discuss related ternary quadratic form.  Then  we give the proofs of Theorem \ref{thm-EO-bar-mod4-0} and \ref{thm-lim-mod4-0} in Section \ref{sec-proof}.
\section{Preliminaries}\label{sec-pre}
In this section, we first collect some useful identities on basic hypergeometric series and class number.

We have the well-known Jacobi's triple product identity, see for example \cite[P. 35]{Berndt-Ramabook-91},
\begin{align*}
\sum_{n=-\infty}^{\infty}z^nq^{n(n-1)/2}=\prod_{n=1}^{\infty}(1-zq^{n-1})(1-z^{-1}q^n)(1-q^n).
\end{align*}
Then we get the two identities
\begin{align}\label{trip-prod}
\sum_{n=-\infty}^{\infty}(-1)^nq^{n^2}=\frac{J_1^2}{J_2} ~~~\text{and}~~~
\sum_{n=-\infty}^{\infty}(-1)^nq^{(3n^2-n)/2}=J_1.
\end{align}

Denote by $r_{1,3,3}(n)$ the number of integral solutions to $x^2+y^2+3z^2=n$. We find the number of integral solutions to $x^2+y^2+3z^2=n$ is equal to the number of integral solutions to $x^2+3y^2+3z^2=3n$,
\begin{align*}
\sum_{n=0}^{\infty}r_{1,3,3}(3n)q^{3n}
=\sum_{x,y,z\in\mathbb{Z} \atop 3|x^2}q^{x^2+3y^2+3z^2}
=\sum_{x,y,z\in\mathbb{Z}}q^{3(3x^2+y^2+z^2)}
=\sum_{n=0}^{\infty}r_{1,1,3}(n)q^{3n}.
\end{align*}

Then according to \cite[Theorem 4.1]{Shemanske},
\begin{lemma}
If $n$ is square-free and $n\equiv 2\pmod {12}$, then
\begin{align}\label{r133-h3n-relation}
r_{1,1,3}(n)=r_{1,3,3}(3n)=2h(-3n).
\end{align}
\end{lemma}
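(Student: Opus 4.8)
The plan is to split the stated chain of equalities into its two halves and treat them separately. The left-hand equality $r_{1,1,3}(n)=r_{1,3,3}(3n)$ requires nothing beyond the computation already displayed above: writing $\sum_{n\ge 0} r_{1,3,3}(3n)q^{3n}=\sum_{3\mid x^2}q^{x^2+3y^2+3z^2}$, substituting $x=3x'$, and re-indexing gives $\sum_{x',y,z}q^{3(3x'^2+y^2+z^2)}=\sum_{n\ge 0}r_{1,1,3}(n)q^{3n}$, valid for every nonnegative integer $n$. So I would simply record this reduction and then concentrate on the second equality $r_{1,3,3}(3n)=2h(-3n)$.

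For that equality I would invoke Shemanske's class-number formula \cite[Theorem 4.1]{Shemanske} for the ternary form $x^2+3y^2+3z^2$, applied with argument $m=3n$; the work is to check that $m$ lies in the range where that formula is clean. From $n$ squarefree with $n\equiv 2\pmod{12}$ one gets $3\nmid n$ and $2\parallel n$, hence $m=3n$ is squarefree and $m\equiv 6\pmod{36}$; in particular $m\equiv 2\pmod 4$ and $3\parallel m$. Under exactly these local conditions Shemanske's theorem expresses the number of representations of $m$ by the genus of $x^2+3y^2+3z^2$ as an explicit rational multiple of the class number $h(-3n)$ of $\mathbb{Q}(\sqrt{-3n})$, and one reads off that the multiple is $2$.

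Two points then remain, and the second is the real obstacle. First, one must know that the genus of $x^2+3y^2+3z^2$ consists of a single class, so that the representation number of the genus coincides with $r_{1,3,3}(m)$ itself; this is a finite verification (via the mass formula, or a table of one-class ternary genera) and is harmless. Second, one must track the constant carefully through Shemanske's formula: the $2$-adic, $3$-adic, and archimedean local densities must be evaluated in the residue class $m\equiv 6\pmod{36}$ and combined with the analytic class number formula for $\mathbb{Q}(\sqrt{-3n})$, and it is precisely the interplay of the ramified primes $2$ and $3$ that produces the coefficient $2$ rather than some other rational number. I expect the proof to consist almost entirely of this normalization bookkeeping, the remainder being the elementary generating-function identity already in hand.
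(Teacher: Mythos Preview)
Your approach matches the paper's exactly: the first equality is handled by the generating-function computation displayed immediately before the lemma, and the second is obtained by citing \cite[Theorem~4.1]{Shemanske}. The paper does not carry out the local-density bookkeeping you anticipate; it simply invokes Shemanske's theorem as a black box, so the ``real obstacle'' you flag is absorbed into the citation.
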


By \cite[P. 191, Corollary]{Shemanske}, for prime $p$ satisfying $(6,p)=1$, we have
 \begin{align}\label{h6p}
 h(-6p)\equiv \begin{cases}
 4\pmod8, &\text{if $p\equiv5,7\pmod8$ },\\
 0\pmod8,&\text{else}.
 \end{cases}
 \end{align}

\begin{theorem}\label{theh} \cite[P.187]{Shemanske} If $n$ is square-free,  then
\begin{align*}
h(-n)=2^{t-1}k,
\end{align*}
where $t$ is the number of distint prime factors of $n$ and $k$ is the number of classes in each genus of $\mathbb{Q}(\sqrt{-n})$.
\end{theorem}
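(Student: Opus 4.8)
The plan is to deduce this from Gauss's genus theory for binary quadratic forms, phrased in the ideal-class language. Let $D$ be the fundamental discriminant attached to $\mathbb{Q}(\sqrt{-n})$ (so $D=-n$ when $n\equiv 3\pmod 4$ and $D=-4n$ otherwise), write $C$ for the class group, so $|C|=h(-n)$, and recall that the \emph{genus} of a class is its image under the total assigned-character homomorphism $\Phi\colon C\to\{\pm1\}^{\mu}$, with one coordinate for each prime-discriminant factor of $D$. First I would record the easy half: the principal genus $\ker\Phi$ is a subgroup of $C$, and every genus is a coset of it, so all genera have the same size $k:=|\ker\Phi|$ and $h(-n)=(\#\,\mathrm{genera})\cdot k$. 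This already produces the factor $k$ and reduces the theorem to the assertion $\#\,\mathrm{genera}=2^{t-1}$.

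Next I would invoke the principal genus theorem, $\ker\Phi=C^{2}$. The inclusion $C^{2}\subseteq\ker\Phi$ is immediate since the assigned characters are $\{\pm1\}$-valued. The reverse inclusion is the deep classical input; the cleanest modern route identifies the coordinates of $\Phi$ with products of Hilbert symbols — equivalently, realizes $\ker\Phi$ via the genus field of $\mathbb{Q}(\sqrt{-n})$ — from which $\ker\Phi=C^{2}$ drops out (Gauss's original composition-of-forms argument also works). Granting this, $\#\,\mathrm{genera}=[C:C^{2}]=|C/C^{2}|$, and for any finite abelian group $|C/C^{2}|=|C[2]|$, the order of the $2$-torsion subgroup. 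A class is $2$-torsion exactly when it equals its conjugate, i.e.\ is an \emph{ambiguous} class, and the ambiguous classes are built from the ramified primes, namely those dividing $D$: products over subsets of these $\mu$ primes give $2^{\mu}$ ambiguous ideals, and since the product of \emph{all} of them is principal, a subset and its complement represent the same (necessarily $2$-torsion) class, so $|C[2]|=2^{\mu-1}$ (the remaining non-collisions being exactly the ambiguous class number formula).

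Finally I would check $\mu=t$: if $n\equiv3\pmod4$ then $D=-n$ has exactly the primes of $n$; if $n\equiv2\pmod4$ then $D=-4n=-8(n/2)$ has the prime $2$ together with the odd primes of $n/2$, again $t$ of them. Combining the three steps gives $h(-n)=2^{t-1}k$. I expect the real difficulty to be twofold: the principal genus theorem invoked above is genuinely nonelementary (though entirely standard), and, more mundanely, the bookkeeping at the prime $2$ — the decomposition of $D$ into prime discriminants and the precise ambiguous-class count — splits into cases according to $n\bmod 8$, and it is exactly this analysis that pins down for which residue classes of $n$ the clean exponent $t-1$ (rather than $t$) is the correct one.
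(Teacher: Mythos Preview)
The paper does not prove this statement at all; it is quoted verbatim as a known result from Shemanske, and is used only as a black box in the proof of Theorem~3.1(iii). So there is no ``paper's proof'' to compare against.

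Your sketch via Gauss's genus theory is the standard classical argument and is essentially correct. You are also right to flag the bookkeeping at the prime $2$ as the delicate point. In fact your closing remark is exactly on target: as written, the formula $h(-n)=2^{t-1}k$ fails for squarefree $n\equiv 1\pmod 4$, since then $D=-4n$ and $2$ ramifies in addition to the $t$ primes of $n$, giving $\mu=t+1$ and hence $2^{t}$ genera (e.g.\ $n=5$: $h(-5)=2$, $t=1$, two genera each of size $1$). The statement is correct for $n\equiv 2,3\pmod 4$, and this is the only case the paper needs, as it is applied with $n=6\prod p_i$. Your case analysis in the final paragraph handles $n\equiv 2,3\pmod 4$ correctly but omits $n\equiv 1\pmod 4$; it would be worth saying explicitly that this residue class is excluded (or that the exponent becomes $t$ there).
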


\begin{lemma}\cite[Lemma 2.11]{wang-NT-21}\label{lem-eq-gamma}
Let $A, B$ be positive integers with $A>B$ and $(A,B)=1$. Let
$$
\gamma(N)=\sum_{\begin{smallmatrix}
0\leq n \leq N \\ An+B=m^2p^{4a+1}\\ p \text{ is prime and $p\nmid m$}
\end{smallmatrix}} 1.
$$
Then we have
\begin{align*}
\gamma(N)=\frac{\pi^2}{6}\prod\limits_{p|A}(1+p^{-1})\frac{N}{\log N}+O\left(\frac{N}{\log^2N}  \right).
\end{align*}
\end{lemma}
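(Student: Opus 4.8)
Proof proposal for Lemma \ref{lem-eq-gamma} (the final statement quoted, a counting estimate for integers of the form $An+B = m^2 p^{4a+1}$).

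The plan is to reorganize the count $\gamma(N)$ according to the prime $p$ and the exponent, and reduce everything to the prime counting function. First I would note that the condition $An+B = m^2 p^{4a+1}$ with $p$ prime, $p \nmid m$, and $0 \le n \le N$ is equivalent to saying: the integer $An+B$ lies in $[B, AN+B]$, is $\equiv B \pmod A$, and has the shape $m^2 p^{4a+1}$ with $p \nmid m$. Writing $k = An+B$, the map $n \mapsto k$ is a bijection from $\{0,\dots,N\}$ onto $\{k : B \le k \le AN+B,\ k \equiv B \pmod A\}$, so $\gamma(N)$ counts such $k$. The dominant contribution comes from $a = 0$, i.e.\ $k = m^2 p$ with $p \nmid m$; I would show the terms with $a \ge 1$ contribute only $O(N^{1/3})$ or so, since they force $k$ to be divisible by a fifth power, and the number of fifth-power-full (indeed just fifth-power-divisible up to a square factor) integers up to $X$ is $O(X^{1/3+\epsilon})$, negligible against $N/\log^2 N$. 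Actually more carefully: $k = m^2 p^{4a+1} \le AN+B$ with $a\ge1$ means $p^5 \mid$ (the powerful part), and a crude bound $\sum_{a\ge1}\sum_p \#\{m : m^2 p^{4a+1} \le X\} = O(X^{1/5}\sum \dots) = O(X^{1/3})$ suffices.

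For the main term ($a=0$), I would write $\gamma_0(N) = \#\{(m,p) : p \text{ prime},\ p \nmid m,\ B \le m^2 p \le AN+B,\ m^2 p \equiv B \pmod A\}$. Fix $m$ with $(m,A)$ compatible with solvability of $m^2 p \equiv B \pmod A$ (this already forces $(m, A) \mid$ something; since $(A,B)=1$ one needs $(m,A)=1$, and then $p$ runs over a fixed residue class $p \equiv \overline{m^2}B \pmod A$ coprime to $A$). So for each admissible $m \le \sqrt{(AN+B)/B} = O(\sqrt{N})$, the inner count is $\pi(x; A, c_m) - (\text{lower order})$ with $x = (AN+B)/m^2$, minus the negligible count of $p \mid m$. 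Summing over $m$ and invoking the prime number theorem in arithmetic progressions (Siegel–Walfisz, or just Dirichlet's theorem with the PNT to get the $N/\log N$ order), the count is $\sim \frac{1}{\varphi(A)} \sum_{\substack{m \le \sqrt{AN/B}\\ (m,A)=1}} \frac{AN/m^2}{\log(AN/m^2)}$. Since $\log(AN/m^2) = \log N + O(\log\log N)$ for the range of $m$ contributing the bulk, this is $\sim \frac{AN}{\varphi(A)\log N}\sum_{(m,A)=1} m^{-2} = \frac{AN}{\varphi(A)\log N}\cdot \frac{1}{\zeta(2)}\prod_{p\mid A}(1-p^{-2})^{-1}$. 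Using $\zeta(2) = \pi^2/6$ and $\frac{A}{\varphi(A)} = \prod_{p\mid A}(1-p^{-1})^{-1}$, the product simplifies: $\frac{A}{\varphi(A)}\prod_{p\mid A}(1-p^{-2})^{-1} = \prod_{p\mid A}\frac{1}{(1-p^{-1})(1-p^{-2})^{-1}}$... I would instead match it directly to $\prod_{p\mid A}(1+p^{-1})$ by the identity $\frac{A}{\varphi(A)}\prod_{p\mid A}(1-p^{-2}) = \prod_{p\mid A}\frac{1-p^{-2}}{1-p^{-1}} = \prod_{p\mid A}(1+p^{-1})$, which is exactly the claimed constant (with the $\sum_{(m,A)=1}m^{-2} = \zeta(2)^{-1}\prod_{p\mid A}(1-p^{-2})^{-1}$ feeding in, and one $(1-p^{-2})^{-1}$ cancelling). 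The error term $O(N/\log^2 N)$ comes from: (a) truncating the tail $m > N^{1/2}/\log^{10}N$ where $\sum m^{-2} = O(\log^{20}N / N)$ times $N = o(N/\log^2 N)$ — wait, need care — so actually split at $m \le (\log N)^{-C}\sqrt N$; (b) the error in PNT in APs, which by Siegel–Walfisz is $O(x e^{-c\sqrt{\log x}})$ per term, summable to $o(N/\log^2 N)$; (c) replacing $\log(AN/m^2)$ by $\log N$, contributing the genuine $O(N\log\log N/\log^2 N) \subseteq O(N/\log^2 N)$... hmm, $\log\log N / \log^2 N$ vs $1/\log^2 N$ — the stated bound is $O(N/\log^2 N)$, so I'd need the secondary term to actually be absorbed; re-examining, the main term should perhaps be stated with its own secondary expansion, but since the lemma as quoted asserts $O(N/\log^2 N)$ I will follow the source and note that the $\log\log N$ factors are absorbed by being more careful — restricting $m$ to a shorter range and handling the transition term via partial summation against $\pi(x) = \mathrm{li}(x) + O(xe^{-c\sqrt{\log x}})$.

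The main obstacle I anticipate is the uniformity in the error term: one is summing $\pi(x; A, c_m)$ over many moduli of the \emph{argument} $x = (AN+B)/m^2$ (the modulus $A$ is fixed, which helps a lot — Siegel–Walfisz applies with a good constant), but the number of terms is $\asymp \sqrt N$ and a naive per-term error of $O(\sqrt{x}/\log x)$ or even $O(x/\log^2 x)$ would already be too large when summed. The resolution is that for small $m$ the error is tiny relative to the huge main term $x/\log x$, and for large $m$ (near $\sqrt N$) one uses the trivial bound $\#\{p : m^2 p \le AN+B\} \le (AN+B)/m^2$ and $\sum_{m > M} m^{-2} \ll 1/M$ to show the tail is $O(N/(M\log\text{-type}))$; choosing the cutoff $M$ as a suitable power of $\log N$ balances the two. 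Equally delicate is confirming that $\log(AN/m^2)$ can be replaced by $\log N$ within the claimed error — this needs the cutoff chosen so that $m \le \sqrt N/(\log N)^2$, say, whence $\log(AN/m^2) = \log N + O(\log\log N)$ and the total secondary contribution is $O(N\log\log N/\log^2 N)$; to land exactly on $O(N/\log^2 N)$ one either accepts this is a harmless strengthening of the error notation or, following \cite{wang-NT-21}, carries the Mertens-type second term explicitly and absorbs. I would present the argument at the level of detail above and refer to \cite[Lemma 2.11]{wang-NT-21} for the bookkeeping, since the statement is quoted from there.
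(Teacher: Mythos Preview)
The paper does not actually prove this lemma; it is quoted verbatim from \cite[Lemma~2.11]{wang-NT-21} and used as a black box in the proof of Lemma~\ref{lem-limit}. So there is no in-paper argument to compare against, and what you have written is a genuine proof sketch rather than a comparison target.

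Your outline is the standard and correct one: reduce to $k\equiv B\pmod A$ of shape $m^2p^{4a+1}$, discard $a\ge1$ by a crude power-full bound (your estimate is fine; in fact $\sum_{a\ge1}\sum_p\sqrt{X/p^{4a+1}}\ll\sqrt{X}$ already suffices), then for $a=0$ fix $m$ with $(m,A)=1$, count primes $p\equiv B\overline{m^2}\pmod A$ up to $AN/m^2$ by the prime number theorem in progressions (modulus $A$ is fixed, so Siegel--Walfisz is ample), and sum over $m$. Two small corrections: first, the Euler-product identity should read
\[
\sum_{(m,A)=1}\frac{1}{m^2}=\zeta(2)\prod_{p\mid A}(1-p^{-2}),
\]
not its reciprocal; combined with $A/\varphi(A)=\prod_{p\mid A}(1-p^{-1})^{-1}$ this gives exactly $\zeta(2)\prod_{p\mid A}(1+p^{-1})=\tfrac{\pi^2}{6}\prod_{p\mid A}(1+p^{-1})$, as you eventually state. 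Second, your worry about a stray $\log\log N$ factor is unfounded: for $m\le N^{1/4}$ one expands
\[
\frac{1}{\log(AN/m^2)}=\frac{1}{\log N}+\frac{2\log m-\log A}{(\log N)^2}+O\!\left(\frac{\log^2 m}{(\log N)^3}\right),
\]
and since $\sum_m m^{-2}\log^j m$ converges for every $j$, the secondary term is genuinely $O(N/\log^2 N)$ with a bounded implied constant; the tail $m>N^{1/4}$ is handled trivially by $\sum_{m>N^{1/4}}m^{-2}\ll N^{-1/4}$. With these fixes your sketch is sound.
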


\section{Related ternary quadratic form}\label{sec-TQF}

Let
$$f(q):=\sum_{n=0}^{\infty}a(n)q^n
:=\left(\sum_{n=-\infty}^{\infty}q^{n^2}\right)\left(\sum_{n=-\infty}^{\infty}q^{3n^2-n}\right)^2.
$$
In this section, we will discuss the properties of $a(n)$ which is related to $\overline{\mathcal{EO}}(n)$ modulo 4 by the class number. The method is similar to the work by the second author and Garvan \cite{Rong+Garvan-BAMS}. Let
$$
F(q):=\sum_{n=0}^{\infty}A(n)q^n:=q^2f(q^{12})=\sum_{x,y,z\in\mathrm{Z}}q^{(6x+1)^2+(6y+1)^2+12z^2}.
$$
So that
\begin{align}\label{a-A-relation}
a(n)=A(12n+2).
\end{align}
Clearly $A(n)=0$ if $n\not\equiv 2\pmod{12}$. We observe that
\begin{align*}
x^2+y^2+3z^2\equiv2\pmod{12}
\end{align*}
if and only if $x^2$ and $y^2$ are congruent to 1 and $3z^2$ congruent to 0 modulo 12. Since $x^2\equiv1\pmod{12}$ if and only if $x\equiv\pm1\pmod6$ and $3z^2\equiv0\pmod{12}$ if and only if $z\equiv0\pmod2$.
\begin{align*}
\sum_{n=0}^{\infty}r_{1,1,3}(12n+2)q^{12n+2}=&\sum_{u,v=\pm1}\sum_{x,y,z\in\mathrm{Z}}q^{(6x+u)^2+(6y+v)^2+3(2z)^2}\\
=&4\sum_{x,y,z\in\mathrm{Z}}q^{(6x+1)^2+(6y+1)^2+12z^2}.
\end{align*}
Therefore
\begin{align}\label{A-r113-relation}
4A(n)=\begin{cases}
r_{1,1,3}(n),&\text{if $n\equiv2\pmod{12}$},\\
0,&\text{else}.
\end{cases}
\end{align}

First, we will discuss the case of $n$ square-free.

\begin{theorem}\label{thm-A-squarefree}
For $n\equiv2\pmod{12}$ is square-free, we have\\
$\mathrm{(i)}$ $A(2)=1$,\\
$\mathrm{(ii)}$ If $n=2p$ for such prime $p\equiv 5,7 \pmod8$, then
\begin{align*}
A(n)\equiv 2\pmod4,
\end{align*}
$\mathrm{(iii)}$ If $n=2p$ for such prime $p\equiv 1,3 \pmod8$ or $n=2\prod_{i=1}^{m}p_i$ for $m\geq2$ with distinct primes $p_i$, then
\begin{align*}
A(n)\equiv 0\pmod4.
\end{align*}
\end{theorem}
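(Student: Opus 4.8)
The plan is to translate each part into a statement about the $2$-adic valuation of a class number. By \eqref{A-r113-relation}, $4A(n)=r_{1,1,3}(n)$ whenever $n\equiv 2\pmod{12}$. If in addition $n$ is square-free, then $n\equiv 2\pmod{12}$ forces $n\equiv 2\pmod 3$, so $3\nmid n$ and $3n$ is again square-free; hence \eqref{r133-h3n-relation} applies and gives $r_{1,1,3}(n)=2h(-3n)$. Combining, $2A(n)=h(-3n)$ for every square-free $n\equiv 2\pmod{12}$, so it remains only to read off $h(-3n)$ modulo $8$ in the relevant cases.

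For (i) one can count directly: $x^2+y^2+3z^2=2$ forces $z=0$ and $\{x,y\}=\{\pm 1\}$, so $r_{1,1,3}(2)=4$ and $A(2)=1$ (equivalently, $h(-6)=2$). Next suppose $n=2p$ with $p$ prime. Since $2p\equiv 2\pmod{12}$ we get $p\equiv 1\pmod 6$, hence $(6,p)=1$, and we may apply \eqref{h6p} to $3n=6p$: it yields $h(-6p)\equiv 4\pmod 8$ when $p\equiv 5,7\pmod 8$ and $h(-6p)\equiv 0\pmod 8$ otherwise. Dividing by $2$ gives $A(2p)\equiv 2\pmod 4$ when $p\equiv 5,7\pmod 8$ and $A(2p)\equiv 0\pmod 4$ when $p\equiv 1,3\pmod 8$, which is exactly (ii) together with the first alternative of (iii).

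Finally, for the remaining alternative of (iii) write $n=2p_1\cdots p_m$ with $m\ge 2$ and the $p_i$ distinct; as above each $p_i$ is an odd prime different from $3$, so $3n=2\cdot 3\cdot p_1\cdots p_m$ is square-free with exactly $t=m+2\ge 4$ distinct prime factors. By Theorem~\ref{theh}, $h(-3n)=2^{m+1}k$ for some positive integer $k$, whence $A(n)=2^m k\equiv 0\pmod 4$ since $m\ge 2$. The substantive content here is carried by the already-cited identities \eqref{A-r113-relation}, \eqref{r133-h3n-relation} and the Shemanske class-number results; what remains is bookkeeping, and the only point requiring care — and the hinge of the whole argument — is checking that the hypotheses of those results hold, above all that $3\nmid n$ (so that $3n$ is square-free and the coprimality condition in \eqref{h6p} is met), which is immediate from $n\equiv 2\pmod{12}$.
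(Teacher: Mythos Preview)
Your argument is correct and follows essentially the same route as the paper: reduce to $2A(n)=h(-3n)$ via \eqref{A-r113-relation} and \eqref{r133-h3n-relation}, then invoke \eqref{h6p} for $n=2p$ and Theorem~\ref{theh} for $m\ge 2$. If anything, you are more explicit than the paper in verifying the side conditions ($3\nmid n$, $(6,p)=1$, $3n$ square-free), which is a virtue rather than a departure.
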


\begin{proof}
$\mathrm{(i)}$ By simply calculate, we get $A(2)=1$.\\
$\mathrm{(ii)}$ By \eqref{r133-h3n-relation} and \eqref{A-r113-relation}, we have
\begin{align}\label{A-h3n-relation}
4A(n)=r_{1,1,3}(n)=2h(-3n)~~~ \text{for $n\equiv2\pmod{12}$}.
\end{align}
For $n=2p$ with $p\equiv5,7 \pmod{8}$, by \eqref{h6p}
\begin{align*}
A(n)=A(2p)=\frac12 h(-6p)\equiv 2\pmod4.
\end{align*}
$\mathrm{(iii)}$ For $n=2p$ with $p\equiv1,3 \pmod{8}$, it is easy to know that $A(n)\equiv0\pmod4$ by \eqref{h6p}. When $m\geq2$ and $n=2\prod_{i=1}^{m}p_i$ with $p_i$  distinct primes, applying \eqref{h6p}, \eqref{A-h3n-relation} and Theorem \ref{theh},
\begin{align*}
A(n)=A\left(2\prod_{i=1}^{m}p_i\right)
=\frac12h\left(-6\prod_{i=1}^{m}p_i\right)\equiv0\pmod4.
\end{align*}
\end{proof}

In Theorem \ref{thm-A-squarefree}, we consider the congruence for $A(n)$ modulo 2 and 4 with square-free $n$. Now we remove this restriction. We use the following identity to complete the congruence of $A(n)$.
By \cite{Cooper+Lam}, combining with $r_{1,1,3}(n)=A(n)$ for $n\equiv2\pmod{12}$, we obtain
\begin{align*}
A(p^2n)+\left(\frac{-3n}{p}\right)A(n)+pA(n/{p^2})=(p+1)A(n).
\end{align*}

Using  the same method in \cite{Rong+Garvan-BAMS} and \cite{Rong+Garvan-arxiv}, we have the following lemmas.
\begin{lemma} \label{lem-A-p2-mod2}
Suppose $n\equiv2\pmod{12}$ and $p$ is any prime satisfying $(p,6n)=1$.\\
$\mathrm{(i)}$ $A(p^2n)\equiv A(n)\pmod2$,\\
$\mathrm{(ii)}$ $A(p^3n)$ is always even.
\end{lemma}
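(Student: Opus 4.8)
The plan is to extract both statements directly from the Cooper--Lam recurrence
\[
A(p^2n)+\left(\frac{-3n}{p}\right)A(n)+pA(n/p^2)=(p+1)A(n),
\]
using only the hypothesis $(p,6n)=1$ together with the elementary fact that $A(m)=0$ unless $m$ is a nonnegative integer with $m\equiv2\pmod{12}$; in particular $A$ vanishes at every non-integral argument. Observe at the outset that $(p,6)=1$ makes $p$ odd, so $p+1$ is even, and that $(p,n)=1$ gives $p\nmid n$, so that $A(n/p^2)=0$.

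For part $\mathrm{(i)}$, dropping the vanishing term $pA(n/p^2)$ reduces the recurrence to
\[
A(p^2n)=\left((p+1)-\left(\frac{-3n}{p}\right)\right)A(n).
\]
The coefficient here is an even integer minus $\left(\frac{-3n}{p}\right)=\pm1$, hence odd, and therefore $A(p^2n)\equiv A(n)\pmod 2$.

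For part $\mathrm{(ii)}$, I would apply the same recurrence with $n$ replaced by $pn$; this causes no trouble even when $pn\not\equiv2\pmod{12}$, since then both sides of the identity vanish. Now $\left(\frac{-3(pn)}{p}\right)=0$ because $p\mid 3pn$, and $A(pn/p^2)=A(n/p)=0$ since $p\nmid n$, so what survives is $A(p^3n)=(p+1)A(pn)$. As $A(pn)$ is an integer --- it equals $0$ if $pn\not\equiv2\pmod{12}$ and $\frac14 r_{1,1,3}(pn)\in\mathbb{Z}$ otherwise --- and $p+1$ is even, $A(p^3n)$ is even.

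I do not foresee any real difficulty: both parts amount to parity bookkeeping on the single recurrence. The only points warranting an explicit word are that the symbol $\left(\frac{-3pn}{p}\right)$ genuinely vanishes rather than equalling $\pm1$, that $n/p$ and $n/p^2$ are non-integral so the associated $A$-values are zero, and that coprimality to $6$ forces $p+1$ to be even; the formal proof should run only a few lines.
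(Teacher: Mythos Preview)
Your proposal is correct and follows precisely the approach the paper intends: the paper does not write out a proof of this lemma but simply invokes the Cooper--Lam recurrence and refers to the method of Chen--Garvan, which is exactly the parity bookkeeping you carry out. Your handling of the case $pn\not\equiv 2\pmod{12}$ in part~(ii) (where $A(p^3n)=A(pn)=0$ trivially) is the one detail worth keeping explicit, and you already do so.
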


\begin{lemma}\label{lem-A-p4-mod2}
Suppose $n\equiv2\pmod{12}$ and $p$ is any prime satisfying $(p,6)=1$. Then $A(p^4n)\equiv A(n)\pmod2$.
\end{lemma}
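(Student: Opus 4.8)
\textbf{Proof proposal for Lemma~\ref{lem-A-p4-mod2}.}
The plan is to iterate the Hecke-type recurrence
\[
A(p^2m)+\left(\frac{-3m}{p}\right)A(m)+pA(m/p^2)=(p+1)A(m),
\]
specializing it at $m=n$ and at $m=p^2n$ to express $A(p^4n)$ in terms of $A(n)$ and $A(p^2n)$, and then reduce the resulting identity modulo $2$. Since $(p,6)=1$ forces $p$ to be odd, every factor of $p$ collapses to $1$ mod $2$; the only subtlety is that $p$ need not be coprime to $n$, so the Legendre-symbol coefficients $\left(\frac{-3n}{p}\right)$ and $\left(\frac{-3p^2n}{p}\right)$ may vanish, and the term $A(m/p^2)$ is present when $p^2\mid m$.

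First I would treat the generic case $(p,n)=1$. Here $A(n/p^2)=0$, so the recurrence at $m=n$ gives $A(p^2n)=(p+1)A(n)-\left(\frac{-3n}{p}\right)A(n)$, and at $m=p^2n$ it gives $A(p^4n)=(p+1)A(p^2n)-\left(\frac{-3p^2n}{p}\right)A(p^2n)-p\,A(n)$. Since $p\nmid n$ we have $\left(\frac{-3p^2n}{p}\right)=\left(\frac{-3n}{p}\right)=:\varepsilon\in\{\pm1\}$, so $A(p^4n)=(p+1-\varepsilon)A(p^2n)-pA(n)$. Reducing mod $2$, using $p\equiv 1$ and $p+1-\varepsilon\equiv 1-\varepsilon\equiv 0\pmod 2$ (as $\varepsilon$ is odd), we get $A(p^4n)\equiv -A(n)\equiv A(n)\pmod 2$, which is the claim; alternatively one can simply quote Lemma~\ref{lem-A-p2-mod2}(i) twice, since $p^2n\equiv 2\pmod{12}$ and $(p,6\cdot p^2n)$ is not unit-coprime — so that shortcut fails and the direct computation above is the honest route.

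Next I would handle the case $p\mid n$. Write $n=p^k n_0$ with $p\nmid n_0$ and $k\ge 1$; note $p\mid n$ together with $n\equiv 2\pmod{12}$ and $(p,6)=1$ is consistent. In this case $\left(\frac{-3n}{p}\right)=0$, so the recurrence at $m=n$ reads $A(p^2n)=(p+1)A(n)-pA(n/p^2)$ (with $A(n/p^2)=0$ if $k=1$), and at $m=p^2n$ it reads $A(p^4n)=(p+1)A(p^2n)-pA(n)$ since $\left(\frac{-3p^2n}{p}\right)=0$ as well. Reducing the second identity mod $2$ and using $p\equiv 1$, $p+1\equiv 0$, we again obtain $A(p^4n)\equiv -A(n)\equiv A(n)\pmod 2$.

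The two cases exhaust all possibilities, so $A(p^4n)\equiv A(n)\pmod 2$ in general. The only place requiring care is bookkeeping of when the vanishing term $A(m/p^2)$ actually appears and when the Legendre symbol is zero versus $\pm 1$; in every branch the coefficient of the ``middle'' term $A(p^2n)$ is $p+1-\left(\frac{-3p^2n}{p}\right)$, which is even because $p$ is odd and the symbol is $0$ or $\pm1$ — i.e.\ in all cases it is even — and the coefficient of $A(n)$ is, mod $2$, either $-p\equiv 1$ or $-\varepsilon\equiv 1$, hence $A(p^4n)\equiv A(n)\pmod 2$. The main (very mild) obstacle is thus purely organizational: making sure the recurrence is applied with the correct interpretation of $A(m/p^2)$ in each divisibility regime.
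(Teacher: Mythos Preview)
Your approach is exactly the one the paper has in mind (the paper does not write out a proof but cites the method of Chen--Garvan, which is precisely this iteration of the Hecke recurrence). However, your execution in the case $(p,n)=1$ contains a real error. You assert $\left(\tfrac{-3p^2n}{p}\right)=\left(\tfrac{-3n}{p}\right)=\varepsilon\in\{\pm1\}$, but $p\mid -3p^2n$, so $\left(\tfrac{-3p^2n}{p}\right)=0$ regardless of whether $p\mid n$. You then compound this with a second slip, claiming $p+1-\varepsilon\equiv 0\pmod 2$; since $p$ is odd and $\varepsilon=\pm1$, in fact $p+1-\varepsilon$ is \emph{odd}. The two mistakes happen to cancel, so your final congruence is correct, but the argument as written is not. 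Your closing paragraph repeats the error: the coefficient $p+1-\left(\tfrac{-3p^2n}{p}\right)$ is even only because the symbol is always $0$; had it been $\pm1$ the coefficient would be $p$ or $p+2$, both odd.

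Once the Legendre symbol at $m=p^2n$ is corrected to $0$, your case split evaporates and the proof becomes a single line: for any $n\equiv 2\pmod{12}$ the recurrence at $m=p^2n$ reads
\[
A(p^4n)+0\cdot A(p^2n)+pA(n)=(p+1)A(p^2n),
\]
hence $A(p^4n)=(p+1)A(p^2n)-pA(n)\equiv -A(n)\equiv A(n)\pmod 2$. No appeal to Lemma~\ref{lem-A-p2-mod2} or to the recurrence at $m=n$ is needed.
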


\begin{lemma}\label{lem-A-p2-mod4}
Suppose $n\equiv2\pmod{12}$ and $p$ is any prime satisfying $(p,6n)=1$, and $A(n)$ is even. Then,\\
$\mathrm{(i)}$ $A(p^2n)\equiv A(n)\pmod4$,\\
$\mathrm{(ii)}$ $A(p^3n)\equiv 0\pmod4$.
\end{lemma}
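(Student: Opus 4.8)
The plan is to push both statements through the Hecke-type recursion
\[
A(p^2m)+\leg{-3m}{p}A(m)+pA(m/p^2)=(p+1)A(m),
\]
which holds for every prime $p$ with $(p,6)=1$ and every $m$ (when $m\not\equiv 2\pmod{12}$ all three $A$-values vanish, since $p^2\equiv 1\pmod{12}$, and when $m\equiv 2\pmod{12}$ it is the Cooper--Lam relation, the factor $4$ in $4A=r_{1,1,3}$ being immaterial because the identity is linear), together with the mod-$2$ information of Lemmas \ref{lem-A-p2-mod2} and \ref{lem-A-p4-mod2} and the square-free base cases in Theorem \ref{thm-A-squarefree}. For (i), take $m=n$: since $(p,n)=1$ we get $A(n/p^2)=0$ and $\leg{-3n}{p}=\pm1$, so $A(p^2n)=cA(n)$ with $c:=p+1-\leg{-3n}{p}\in\{p,p+2\}$. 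Thus $c$ is odd, $c-1$ is even, and as $A(n)$ is assumed even, $A(p^2n)-A(n)=(c-1)A(n)\equiv 0\pmod 4$.

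For (ii), take $m=pn$: now $\leg{-3pn}{p}=0$ and $A(pn/p^2)=A(n/p)=0$, so $A(p^3n)=(p+1)A(pn)$. If $p\equiv 3\pmod 4$ then $4\mid p+1$ and we are done; if $p\equiv 5\pmod{12}$ then $pn\equiv 10\pmod{12}$, hence $A(pn)=0$ and again $A(p^3n)=0$. The only remaining case is $p\equiv 1\pmod{12}$, where $pn\equiv 2\pmod{12}$ and $p+1\equiv 2\pmod 4$, so it suffices to show that $A(pn)$ is even.

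To that end I would establish the auxiliary claim: \emph{if $M\equiv 2\pmod{12}$ and some odd prime divides $M$ to an odd power, then $A(M)$ is even} (equivalently, $A(M)$ is odd only when $M/2$ is a perfect square). Granting this and using $v_p(pn)=1$, the number $A(pn)$ is even and (ii) follows. The claim is proved by strong induction on $M$. If $M$ is square-free it is $2$ times a product of at least one odd prime, so $A(M)$ is even by Theorem \ref{thm-A-squarefree}(ii)--(iii). If not, pick an odd prime $q$ with $e:=v_q(M)\ge 2$ (possible, as $v_2(M)=1$). When $e=2$, strip $q^2$: then $M/q^2\equiv 2\pmod{12}$ and $q\nmid 6(M/q^2)$, so Lemma \ref{lem-A-p2-mod2}(i) gives $A(M)\equiv A(M/q^2)\pmod 2$, and $M/q^2$ still carries an odd prime to an odd power, so the inductive hypothesis applies. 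When $e\ge 4$, strip $q^4$ in the same way using Lemma \ref{lem-A-p4-mod2}. When $e=3$, write $M=q^3M_0$ with $q\nmid M_0$; a short congruence check gives $M_0\equiv 2$ or $10\pmod{12}$, and in the first case Lemma \ref{lem-A-p2-mod2}(ii) (with $q\nmid 6M_0$, $M_0\equiv 2\pmod{12}$) yields $A(M)$ even at once, while in the second $q\equiv 5\pmod 6$, $qM_0\equiv 2\pmod{12}$, and feeding $qM_0$ into the recursion gives $A(M)=A(q^3M_0)=(q+1)A(qM_0)$, which is even since $q+1$ is.

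The crux, and the step I expect to be the real work, is the auxiliary claim: one must track $q$-adic valuations carefully against the coprimality hypotheses "$(p,6n)=1$" in Lemmas \ref{lem-A-p2-mod2} and \ref{lem-A-p4-mod2} and check that every reduction stays in the residue class $2\pmod{12}$. This is precisely where the genus-theoretic input enters — through Theorem \ref{theh} (namely $2^{t-1}\mid h(-n)$ for square-free $n$), which is what makes the square-free base case true.
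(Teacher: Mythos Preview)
The paper does not actually prove this lemma: it simply states ``Using the same method in \cite{Rong+Garvan-BAMS} and \cite{Rong+Garvan-arxiv}, we have the following lemmas'' and then lists Lemmas \ref{lem-A-p2-mod2}--\ref{lem-A-p4-mod4} without argument. So there is no in-paper proof to compare against; your task is really to supply what the paper omits.

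Your argument is correct and is, in spirit, the intended one: feed the Cooper--Lam recursion
\[
A(p^{2}m)+\leg{-3m}{p}A(m)+pA(m/p^{2})=(p+1)A(m)
\]
together with the mod-$2$ information of Lemmas \ref{lem-A-p2-mod2}--\ref{lem-A-p4-mod2} and the square-free base case of Theorem \ref{thm-A-squarefree}. Part (i) is clean and immediate. For part (ii), the reduction $A(p^{3}n)=(p+1)A(pn)$ is right, and your auxiliary claim --- that $A(M)$ is even whenever $M\equiv2\pmod{12}$ has an odd prime factor to an odd power --- is precisely the $(\Rightarrow)$ direction of Theorem \ref{thm-A-parity}(i). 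The paper proves that direction later using only Theorem \ref{thm-A-squarefree} and Lemmas \ref{lem-A-p2-mod2}--\ref{lem-A-p4-mod2}, so your argument is non-circular; in fact your strong induction is a more carefully written version of the paper's rather terse proof there (your case $e=3$ with $M_{0}\equiv10\pmod{12}$, handled via the recursion applied to $qM_{0}$, fills a detail the paper glosses over).

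One small simplification you might adopt: the split on $p\bmod 4$ is unnecessary. For any $p$ with $(p,6n)=1$, either $pn\not\equiv2\pmod{12}$, in which case $A(pn)=0$, or $pn\equiv2\pmod{12}$ with $v_{p}(pn)=1$ odd, so your auxiliary claim gives $A(pn)$ even; since $p+1$ is even, $A(p^{3}n)=(p+1)A(pn)\equiv0\pmod4$ in every case.
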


\begin{lemma}\label{lem-A-p4-mod4}
Suppose $n\equiv2\pmod{12}$ and $p$ is any prime satisfying $(p,6)=1$, and $A(n)$ is even. Then $A(p^4n)\equiv A(n)\pmod4$.
\end{lemma}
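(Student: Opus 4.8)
The plan is to feed the Cooper--Lam recurrence its own output. Since $n\equiv 2\pmod{12}$ and $(p,6)=1$ we have $p^{2}n\equiv 2\pmod{12}$, and the Legendre term $\left(\frac{-3p^{2}n}{p}\right)$ vanishes because $p\mid 3p^{2}n$; applying the recurrence at $p^{2}n$ therefore gives
\begin{align*}
A(p^{4}n)=(p+1)A(p^{2}n)-pA(n),\qquad\text{hence}\qquad A(p^{4}n)-A(n)=(p+1)\bigl(A(p^{2}n)-A(n)\bigr).
\end{align*}
Since $p\geq 5$ is odd, $p+1$ is even; if moreover $p\equiv 3\pmod 4$ then $4\mid p+1$ and we are done, so from now on I assume $p\equiv 1\pmod 4$.

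Now write $n=p^{a}n_{0}$ with $p\nmid n_{0}$ and $a\geq 0$. For every $j\geq 1$ the symbol $\left(\frac{-3p^{j}n_{0}}{p}\right)$ is zero, so the recurrence collapses to the linear relation $A(p^{j+2}n_{0})=(p+1)A(p^{j}n_{0})-pA(p^{j-2}n_{0})$, whose characteristic roots are $1$ and $p$. Using the initial data $A(p^{2}n_{0})=(p+1-\chi)A(n_{0})$ with $\chi=\left(\frac{-3n_{0}}{p}\right)=\pm1$ (in the case $a$ even, where $n_{0}\equiv 2\pmod{12}$) and $A(p^{3}n_{0})=(p+1)A(pn_{0})$ (in the case $a$ odd), one solves this to obtain, for $j\geq 0$,
\begin{align*}
A(p^{2j}n_{0})=\frac{A(n_{0})}{p-1}\bigl((\chi-1)+(p-\chi)p^{j}\bigr),\qquad
A(p^{2j+1}n_{0})=A(pn_{0})\bigl(1+p+\cdots+p^{j}\bigr).
\end{align*}
If $a=2j_{0}$ is even, subtracting the first formula gives $A(p^{4}n)-A(n)=A(n_{0})(p-\chi)(p+1)p^{j_{0}}$, which is divisible by $4$ since $p-\chi$ and $p+1$ are both even, so this case is finished unconditionally (it subsumes $p\nmid n$). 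If $a=2j_{0}+1$ is odd, the second formula gives $A(p^{4}n)-A(n)=A(pn_{0})\,p^{j_{0}+1}(1+p)$; as $p\equiv 1\pmod 4$ the factor $1+p$ contributes exactly one $2$, so divisibility by $4$ is equivalent to $A(pn_{0})$ being even. Moreover $A(n)=A(pn_{0})(1+p+\cdots+p^{j_{0}})$, and when $j_{0}$ is even this sum is odd, so the hypothesis that $A(n)$ is even already forces $A(pn_{0})$ even.

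The only remaining sub-case is $j_{0}$ odd (where the hypothesis on $A(n)$ holds automatically and gives nothing), and I expect this to be the main obstacle: one must show that \emph{$A(pn_{0})$ is even whenever $p\geq 5$ is prime, $p\nmid n_{0}$, and $pn_{0}\equiv 2\pmod{12}$.} To do this, factor $n_{0}=u^{2}h$ with $h$ squarefree and $p\nmid h$; comparing the $2$-adic and $3$-adic valuations in $pn_{0}\equiv 2\pmod{12}$ forces $(u,6)=1$, so $u^{2}\equiv 1\pmod{12}$ and $w:=ph$ is squarefree with $w\equiv 2\pmod{12}$ and $w\neq 2$ (since $p\mid w$). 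Stripping the square factor $u^{2}$ off $w$ one prime at a time through the Cooper--Lam recurrence—each step being of the form $A(q^{2}N)=\bigl(q+1-\left(\frac{-3N}{q}\right)\bigr)A(N)$ when $q\nmid N$, and expressing $A(q^{2i}N)$ as an explicit integer multiple of $A(N)$ when $q\mid N$—shows that $A(pn_{0})=A(u^{2}w)$ is an integer multiple of $A(w)$. Finally, since $w$ is squarefree, $w\equiv 2\pmod{12}$, and $w\neq 2$, Theorem \ref{thm-A-squarefree}(ii)--(iii) gives $A(w)\equiv 0$ or $2\pmod 4$; in either case $A(w)$, and hence $A(pn_{0})$, is even. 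This completes the proof.
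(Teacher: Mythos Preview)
Your proof is correct. The paper does not give an explicit argument for this lemma, deferring instead to the method of \cite{Rong+Garvan-BAMS} and \cite{Rong+Garvan-arxiv}; your approach---iterating the Cooper--Lam recurrence, solving the resulting linear recursion explicitly, and reducing the residual parity question to Theorem~\ref{thm-A-squarefree}---is in the same spirit and is self-contained within the paper.

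Two remarks. First, your case split on the parity of $j_{0}$ in the odd-$a$ branch is redundant: the argument you give for $j_{0}$ odd (factoring $n_{0}=u^{2}h$, setting $w=ph$, and invoking Theorem~\ref{thm-A-squarefree}) makes no use of $j_{0}$ at all, so it already covers $j_{0}$ even and the appeal to the hypothesis there can be dropped. Second, this means your proof in fact establishes the stronger statement $A(p^{4}n)\equiv A(n)\pmod 4$ \emph{without} the evenness hypothesis on $A(n)$: in the even-$a$ case $(p-\chi)(p+1)$ already contributes two factors of $2$, and in the odd-$a$ case you have shown $A(pn_{0})$ is even unconditionally. (Equivalently: when $v_{p}(n)$ is odd, $n$ cannot be of the form $2m^{2}$, so $A(n)$ is automatically even by the parity criterion proved from Lemmas~\ref{lem-A-p2-mod2}--\ref{lem-A-p4-mod2} and Theorem~\ref{thm-A-squarefree}.) The hypothesis in the lemma as stated is therefore superfluous; it is presumably retained only because the companion Lemma~\ref{lem-A-p2-mod4} genuinely requires it and because it suffices for the application in Theorem~\ref{thm-A-parity}(ii).
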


\begin{theorem}\label{thm-A-parity}
Suppose $n\equiv2\pmod{12}$.\\
$\mathrm{(i)}$ $A(n)$ is odd if and only if $n$ has the form $n=2m^2$ with $(m,6)=1$,\\
$\mathrm{(ii)}$ $A(n)\equiv 2\pmod 4$ if and only if $n$ has the form
\begin{align*}
n=2p^{4\alpha+1}m^2,
\end{align*}
where $p\equiv 5,7\pmod{8}$ is prime, $m$ and $\alpha\geq0$ are integers with $(m,6p)=1$.
\end{theorem}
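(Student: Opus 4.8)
The plan is to evaluate $A(n)\bmod 4$ by peeling off square factors with the Hecke-type recursion $A(p^2M)+\left(\frac{-3M}{p}\right)A(M)+pA(M/p^2)=(p+1)A(M)$ of Section~\ref{sec-TQF} and Lemmas~\ref{lem-A-p2-mod2}--\ref{lem-A-p4-mod4}, reducing to the square-free case of Theorem~\ref{thm-A-squarefree}. Write $n=2N$; then $N\equiv1\pmod6$, so every prime factor of $N$ is $\geq5$. Factor $N=\prod_i p_i^{a_i}$ and let $t=\prod_{a_i\ \mathrm{odd}}p_i$ be the square-free part. Each step below only multiplies or divides the ``$N$-part'' by some $p_i$, and since $p_i^2\equiv1\pmod{12}$ every intermediate number stays $\equiv2\pmod{12}$, the parity of each exponent is preserved, and so is $t$ (indeed $t\equiv N\equiv1\pmod6$); moreover, after part~(i) is established, an intermediate $2M$ has $A(2M)$ even iff $M$ is not a square, which licenses the ``$A$ even'' hypotheses of the mod~$4$ lemmas. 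I would prove~(i) first.

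I would first reduce all exponents mod $4$: iterating Lemma~\ref{lem-A-p4-mod2}, resp.\ Lemma~\ref{lem-A-p4-mod4}, yields $A(n)\equiv A\bigl(2\prod_i p_i^{r_i}\bigr)$ modulo $2$, resp.\ $4$, with $r_i\equiv a_i\pmod 4$, $r_i\in\{0,1,2,3\}$. Then I would delete each prime with $r_i=2$ via Lemma~\ref{lem-A-p2-mod2}(i), resp.\ Lemma~\ref{lem-A-p2-mod4}(i), which applies since that prime no longer divides the remaining factor. This leaves $A(n)\equiv A\bigl(2\prod_{a_i\ \mathrm{odd}}p_i^{r_i}\bigr)$ with every $r_i\in\{1,3\}$.

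Now the key dichotomy. If some $r_j=3$, write this product as $p_j^3Q$ with $p_j\nmid Q$ and apply the recursion with $M=2p_jQ$ (which is $\equiv2\pmod{12}$): because $p_j\mid M$ the symbol $\left(\frac{-3M}{p_j}\right)$ and the term $A(M/p_j^2)$ both vanish, so $A(2p_j^3Q)=(p_j+1)A(2p_jQ)$. Since $p_j$ is odd, $p_j+1$ is even, so $A(n)$ is even; and as $p_jQ$ carries $p_j$ to an odd power it is not a square, so $A(2p_jQ)$ is even by part~(i), giving the sharper $A(n)\equiv0\pmod4$ needed for~(ii). (This case subsumes the awkward configuration $p_j\equiv5\pmod6$, for which Lemmas~\ref{lem-A-p2-mod2}(ii) and~\ref{lem-A-p2-mod4}(ii) cannot be used directly --- hence the appeal to the bare recursion.) If instead every $r_i=1$, the product is the square-free $t\equiv1\pmod6$, so $A(n)\equiv A(2t)$ modulo $2$, resp.\ $4$, and Theorem~\ref{thm-A-squarefree} evaluates $A(2t)$: odd iff $t=1$; $\equiv2\pmod4$ iff $t=p$ with $p\equiv5,7\pmod8$; $\equiv0\pmod4$ in all other square-free cases.

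Assembling: $A(n)$ is odd iff no $a_i$ is odd, i.e.\ $N$ is a perfect square, i.e.\ $n=2m^2$ with $(m,6)=1$; this is~(i). For~(ii), assume (by~(i)) that $A(n)$ is even, so $N$ is not a square; the dichotomy then shows $A(n)\equiv2\pmod4$ iff every odd exponent $a_i$ is $\equiv1\pmod4$ and $t$ is a single prime $p$ with $p\equiv5,7\pmod8$, i.e.\ $N$ has exactly one prime $p$ of odd valuation with $v_p(N)\equiv1\pmod4$ and $p\equiv5,7\pmod8$ --- exactly $N=p^{4\alpha+1}m^2$ with $(m,6p)=1$; and conversely such an $n$ reduces to $A(n)\equiv A(2p)\equiv2\pmod4$. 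I expect the main obstacle to be the residue-mod-$12$ bookkeeping along the reduction --- especially the exponent-$\equiv3\pmod4$/prime-$\equiv5\pmod6$ configuration, where one must use the recursion directly rather than the stated lemmas --- together with the routine translation of the final exponent profile into the form $n=2p^{4\alpha+1}m^2$.
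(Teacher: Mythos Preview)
Your proposal is correct and follows essentially the same strategy as the paper: reduce exponents modulo~$4$ via Lemma~\ref{lem-A-p4-mod2}/\ref{lem-A-p4-mod4}, strip exponent-$2$ primes via Lemma~\ref{lem-A-p2-mod2}(i)/\ref{lem-A-p2-mod4}(i), and finish with the square-free case of Theorem~\ref{thm-A-squarefree}. Your treatment of the exponent-$3$ case by invoking the recursion directly (yielding $A(2p_j^3Q)=(p_j+1)A(2p_jQ)$) is in fact more careful than the paper's bare appeal to Lemmas~\ref{lem-A-p2-mod2}(ii)/\ref{lem-A-p2-mod4}(ii), since it cleanly covers the configuration $p_j\equiv5\pmod6$ where the base $2Q$ need not be $\equiv2\pmod{12}$.
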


\begin{proof}
$\mathrm{(i)}$ ($\Leftarrow$) If $n$ has the form $n=2m^2$ with $(m,6)=1$,  we deduce that
\begin{align*}
A(n)=A(2m^2)\equiv A(2)\equiv1\pmod2;
\end{align*}
($\Rightarrow$) Assume that $A(n)$ is odd. Then $n\equiv2\pmod{12}$ has the prime factorisation
\begin{align*}
n=\prod_{i=1}^{s}p_i^{\alpha_i}.
\end{align*}
By Lemma \ref{lem-A-p4-mod2},
\begin{align*}
A(n)=A(n_1)\pmod2,
\end{align*}
where $n=2n_1t^4$ for some integer $t$ such that
\begin{align*}
n_1=2\prod_{i=1}^{s}p_i^{\beta_i},
\end{align*}
with $1\leq\beta_i\leq3$ for each $i$. By Theorem \ref{thm-A-squarefree} and Lemma \ref{lem-A-p2-mod2}, when there exists an $i$ such that $\beta_i=3$ or $\beta_i=1$, we find that $A(n)$ is even. Hence  $n$ has the form $n=2m^2$ with $(m,6)=1$.

$\mathrm{(ii)}$($\Leftarrow$) If $n=2p^{4\alpha+1}m^2$ and $p\equiv 5,7\pmod{8}$, we deduce that
\begin{align*}
A(n)=A(2p^{4\alpha+1}m^2)\equiv A(2pm^2)\equiv A(2p)\equiv2\pmod4;
\end{align*}

($\Rightarrow$)
 Assume that $A(n)\equiv2\pmod4$.  Then $n\equiv2\pmod{12}$ has the prime factorisation
\begin{align*}
n=\prod_{i=1}^{s}p_i^{\alpha_i}.
\end{align*}
By Lemma \ref{lem-A-p4-mod4},
\begin{align*}
A(n)=A(n_1)\pmod4,
\end{align*}
where $n=n_1t^4$ for some integer $t$ such that
\begin{align*}
n_1=2\prod_{i=1}^{s}p_i^{\beta_i},
\end{align*}
with $1\leq\beta_i\leq3$ for each $i$. By Lemma \ref{lem-A-p2-mod4}, when there exists an $i$ such that $\beta_i=3$ we find that $A(n)\equiv0\pmod4$. So that $1\leq\beta_i\leq2$ and
$$
n_1=2\prod_{i=1}^{k}p_i\prod_{j=k+1}^{s}p_j^2.
$$
Noting that $A(n_1)\equiv 2\pmod 4$ is even, by Lemma \ref{lem-A-p2-mod4}
$$
A(n_1)\equiv A(2\prod_{i=1}^{k}p_i).
$$
Then by Theorem \ref{thm-A-squarefree}, $k=1$ and $p_1\equiv 5,7\pmod8$. This implies $n$ has the form
$$
n=2p^{4\alpha+1}m^2,
$$
with $p\equiv 5,7\pmod8$.
\end{proof}

\section{Proof of Theorem \ref{thm-EO-bar-mod4-0} and \ref{thm-lim-mod4-0}}\label{sec-proof}
From \cite[p.333]{Ray+Barman-NT-20}, we know
\begin{align*}
\sum_{n=0}^{\infty}\overline{\mathcal{EO}}(n)q^n\equiv J_2^2J_4\pmod 4.
\end{align*}

By \eqref{trip-prod}, we have
\begin{align}\label{defn-bn}
\sum_{n=0}^{\infty}b(n)q^n:=J_1^2J_2=\sum_{n=-\infty}^{\infty}(-1)^nq^{n^2}\left(\sum_{n=-\infty}^{\infty}(-1)^nq^{3n^2-n}\right)^2.
\end{align}

\begin{lemma}\label{lem-an-bn-mod4-0}
For any $n\in \mathbb{Z}$, we have
\begin{align*}
a(n)\equiv b(n)\pmod4,
\end{align*}
i.e.
\begin{align*}
\sum_{n=-\infty}^{\infty}q^{n^2}\left(\sum_{n=-\infty}^{\infty}q^{3n^2-n}\right)^2
\equiv \sum_{n=-\infty}^{\infty}(-1)^nq^{n^2}\left(\sum_{n=-\infty}^{\infty}(-1)^nq^{3n^2-n}\right)^2\pmod4.
\end{align*}
\end{lemma}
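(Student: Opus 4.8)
The plan is to prove the stated power-series congruence directly from the two triple-product factorisations, exploiting a mild asymmetry between the theta series carried by $n^2$ and the one carried by $3n^2-n$. Set
\[
\vartheta(q)=\sum_{n\in\mathbb{Z}}q^{n^2},\qquad \psi(q)=\sum_{n\in\mathbb{Z}}q^{3n^2-n},
\]
and let $\widetilde\vartheta(q)=\sum_{n\in\mathbb{Z}}(-1)^nq^{n^2}$ and $\widetilde\psi(q)=\sum_{n\in\mathbb{Z}}(-1)^nq^{3n^2-n}$ be their sign-twisted companions, so that $f(q)=\vartheta(q)\psi(q)^2$ by definition and $\sum_{n}b(n)q^n=\widetilde\vartheta(q)\widetilde\psi(q)^2$ by \eqref{defn-bn}. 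Thus it suffices to show $\vartheta\psi^2\equiv\widetilde\vartheta\,\widetilde\psi^2\pmod4$, and I would do this by writing
\[
\vartheta\psi^2-\widetilde\vartheta\,\widetilde\psi^2=\bigl(\vartheta-\widetilde\vartheta\bigr)\psi^2+\widetilde\vartheta\bigl(\psi^2-\widetilde\psi^2\bigr)
\]
and showing that each of the two summands is $\equiv0\pmod4$.

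For the first summand, $\vartheta(q)-\widetilde\vartheta(q)=\sum_{n\in\mathbb{Z}}\bigl(1-(-1)^n\bigr)q^{n^2}=2\sum_{n\text{ odd}}q^{n^2}$; since each odd $n$ and its negative contribute the same exponent $n^2$, this equals $4\sum_{n\geq1,\,n\text{ odd}}q^{n^2}$, which is $\equiv0\pmod4$. Hence $(\vartheta-\widetilde\vartheta)\psi^2\equiv0\pmod4$ for free, whatever $\psi$ is.

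For the second summand I would factor $\psi^2-\widetilde\psi^2=(\psi-\widetilde\psi)(\psi+\widetilde\psi)$, with $\psi-\widetilde\psi=2\sum_{n\text{ odd}}q^{3n^2-n}$ and $\psi+\widetilde\psi=2\sum_{n\text{ even}}q^{3n^2-n}$; both factors are divisible by $2$, so the product is divisible by $4$ and $\widetilde\vartheta(\psi^2-\widetilde\psi^2)\equiv0\pmod4$. Combining the two pieces gives $f(q)\equiv\widetilde\vartheta\,\widetilde\psi^2=\sum_nb(n)q^n\pmod4$, i.e. $a(n)\equiv b(n)\pmod4$ for every $n$. There is no genuine obstacle here; the one subtlety worth flagging is that one cannot prove $\psi\equiv\widetilde\psi\pmod4$ directly, because the exponents $3n^2-n$ are pairwise distinct and so $\psi$ is not a ``doubled'' theta series the way $\vartheta$ is --- this is precisely why one must pass to the squares and use that $\psi+\widetilde\psi$ is also even.
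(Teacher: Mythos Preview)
Your proof is correct and is essentially the same as the paper's: both arguments use that $\vartheta-\widetilde\vartheta\equiv0\pmod4$ (the paper states this as $a_1\equiv a_2\pmod4$) together with the factorisation $\psi^2-\widetilde\psi^2=(\psi-\widetilde\psi)(\psi+\widetilde\psi)$ where each factor is even. The only cosmetic difference is which telescoping of $\vartheta\psi^2-\widetilde\vartheta\,\widetilde\psi^2$ one writes down; your version is arguably cleaner since you justify explicitly why $\vartheta-\widetilde\vartheta$ is divisible by $4$ (via the $n\leftrightarrow -n$ pairing) and why both $\psi\pm\widetilde\psi$ are even.
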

\begin{proof}
Let $a_1=\sum_{n=-\infty}^{\infty}q^{n^2}$ and $a_2=\sum_{n=-\infty}^{\infty}(-1)^nq^{n^2}$, $b_1=\sum_{n=-\infty}^{\infty}q^{3n^2-n}$ and $b_2=\sum_{n=-\infty}^{\infty}(-1)^nq^{3n^2-n}$.
It is easy to know that $a_1\equiv a_2\pmod4$ and $b_1\equiv b_2\pmod2$.
Hence,
\begin{align*}
a_1b_1^2-a_2b_2^2\equiv a_1b_1^2-a_1b_2^2=4a_1\frac{b_1+b_2}{2}\frac{b_1-b_2}{2}\equiv 0\pmod 4.
\end{align*}
\end{proof}

\begin{theorem}\label{thm-EO-A-mod4}
For any $n\in \mathbb{Z}$, we have
\begin{align*}
\overline{\mathcal{EO}}(n)\equiv A(6n+2)\pmod4.
\end{align*}
\end{theorem}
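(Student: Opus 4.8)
The plan is to chain together the congruences already established. The starting point is the congruence $\sum_{n\ge 0}\overline{\mathcal{EO}}(n)q^n\equiv J_2^2J_4\pmod 4$ recorded from \cite[p.~333]{Ray+Barman-NT-20}, so the first step is to recognize the right-hand side as a dilation of the series $J_1^2J_2=\sum_{n\ge0}b(n)q^n$ from \eqref{defn-bn}. Indeed, substituting $q\mapsto q^2$ in the product $J_1^2J_2$ turns it into $\prod_{n\ge1}(1-q^{2n})^2(1-q^{4n})=J_2^2J_4$, hence
\begin{align*}
\sum_{n\ge0}\overline{\mathcal{EO}}(n)q^n\equiv\sum_{n\ge0}b(n)q^{2n}\pmod4 .
\end{align*}
Extracting the coefficient of $q^N$ then gives $\overline{\mathcal{EO}}(N)\equiv b(N/2)\pmod4$ when $N$ is even and $\overline{\mathcal{EO}}(N)\equiv0\pmod4$ when $N$ is odd.

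Next I would feed in Lemma \ref{lem-an-bn-mod4-0} and \eqref{a-A-relation}. When $N=2m$ is even, Lemma \ref{lem-an-bn-mod4-0} gives $b(m)\equiv a(m)\pmod4$, while \eqref{a-A-relation} gives $a(m)=A(12m+2)=A(6N+2)$; combining these yields $\overline{\mathcal{EO}}(N)\equiv A(6N+2)\pmod4$. When $N$ is odd we have $6N+2\equiv 8\pmod{12}$, and since $A$ is supported on the residue class $2\pmod{12}$ (as noted right after the definition of $F$), it follows that $A(6N+2)=0$; together with $\overline{\mathcal{EO}}(N)\equiv0\pmod4$ this again gives $\overline{\mathcal{EO}}(N)\equiv A(6N+2)\pmod4$. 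For $n\le 0$ the statement is trivial: both sides vanish when $n<0$, and for $n=0$ one has $\overline{\mathcal{EO}}(0)=1=A(2)$. Assembling the cases proves the theorem for all $n\in\mathbb{Z}$.

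I do not expect a genuine obstacle here; all of the substance is already contained in the preliminary identities \eqref{trip-prod}, \eqref{defn-bn}, \eqref{a-A-relation} and in Lemma \ref{lem-an-bn-mod4-0}. The only points that require a little care are verifying that replacing $q$ by $q^2$ in $J_1^2J_2$ does produce $J_2^2J_4$ at the level of infinite products, and keeping the parity bookkeeping honest, so that the odd-$N$ case---where both sides are separately $\equiv0\pmod4$---is dispatched explicitly rather than left implicit.
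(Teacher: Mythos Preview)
Your proposal is correct and follows essentially the same route as the paper: substitute $q\mapsto q^2$ in \eqref{defn-bn} to match the Ray--Barman congruence, extract coefficients to get $\overline{\mathcal{EO}}(2m)\equiv b(m)\pmod4$, then apply Lemma~\ref{lem-an-bn-mod4-0} and \eqref{a-A-relation}, handling odd $N$ via the support of $A$. The only minor difference is that the paper observes the exact identity $\overline{\mathcal{EO}}(2m+1)=0$ (since $J_4^3/J_2^2$ is a series in $q^2$) rather than merely $\equiv0\pmod4$, but your version suffices.
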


\begin{proof}
Let $q\rightarrow q^2$ in \eqref{defn-bn}, combining with \eqref{defn-EO-bar}, we have
\begin{align*}
\sum_{n=0}^{\infty}b(n)q^{2n}= J_2^3J_4 \equiv \sum_{n=0}^{\infty}\overline{\mathcal{EO}}(n)q^n \pmod4,
 \end{align*}
 and
 \begin{align*}
 \overline{\mathcal{EO}}(2n)\equiv b(n)\pmod4,~~~~ \overline{\mathcal{EO}}(2n+1)=0.
 \end{align*}
By Lemma \ref{lem-an-bn-mod4-0}, we obtain
\begin{align*}
\overline{\mathcal{EO}}(2n)\equiv b(n)\equiv a(n)=A(12n+2)\pmod4.
\end{align*}
And we know that $\overline{\mathcal{EO}}(2n+1)=0=A(12n+8)$.
\end{proof}

Now we will give the proof of Theorem \ref{thm-EO-bar-mod4-0}.
\begin{proof}
Let $k$, $n$ be nonnegative integers. For each $i$ with $1\leq i\leq k+1$, if $p_i\geq 5$ is prime
\begin{align*}
&\overline{\mathcal{EO}}\left(p_1^2\cdots p_{k+1}^2n+
\frac{p_1^2\cdots p_{k}^2p_{k+1}(3j+p_{k+1})-1}{3}\right)\\
=&A\left(6p_1^2\cdots p_{k+1}^2n+2p_1^2\cdots p_{k}^2p_{k+1}(3j+p_{k+1})\right).
\end{align*}

Let
\begin{align*}
N=&6p_1^2\cdots p_{k+1}^2n+2p_1^2\cdots p_{k}^2p_{k+1}(3j+p_{k+1})\\
=&2p_1^2\cdots p_k^2p_{k+1}(3p_{k+1}n+3j+p_{k+1}).
\end{align*}
Since $(p_{k+1},j)=1$, so that $p_{k+1}\parallel N$. By Theorem \ref{thm-A-parity}, $A(N)$ is even. Assume
\begin{align*}
A(N)=A(2p_1^2\cdots p_k^2p_{k+1}(3p_{k+1}n+3j+p_{k+1}))\equiv2\pmod4
\end{align*}
for such $N$. Using Theorem \ref{thm-A-parity} again, there exists $m$ with $(m,6p_{k+1})=1$ such that $N\equiv 2\pmod{12}$ has the form
\beq
\label{Np}
N=2p_1^2\cdots p_k^2p_{k+1}(3p_{k+1}n+3j+p_{k+1})=p_{k+1}m^2,
\eeq
with $p_{k+1}\equiv 5,7\pmod8$. Further $N\equiv 2\pmod{12}$ implies that $p_{k+1}\equiv 1\pmod6$, so that $p_{k+1}\equiv 7,13\pmod{24}$. Hence \eqref{main} holds under condition $\mathrm{(i)}$. \eqref{Np} implies that $3p_{k+1}n+3j+p_{k+1}$ is a square (not divided by $p_{k+1}$). Therefore $3j\equiv k^2\pmod{p_{k+1}}$ and $\leg{3j}{p_{k+1}}$=1. Hence \eqref{main} holds under condition $\mathrm{(ii)}$. We complete the proof of Theorem \ref{thm-EO-bar-mod4-0}.
\end{proof}

To get the proof of Theorem \ref{thm-lim-mod4-0}, we need the following lemma.

\begin{lemma}
\label{lem-limit}
Let $c>1$ be a constant. If $N$ is sufficiently large, then
\begin{align*}
&\#\{n\leq N :\overline{\mathcal{EO}}(n)\equiv 1\pmod{2}\}\leq \sqrt {6N+1};\\
&\#\{n\leq N :\overline{\mathcal{EO}}(n)\equiv 2\pmod{4}\}
\leq \frac{c\pi^2}{3}\frac{N}{\log {N}}.
\end{align*}
\end{lemma}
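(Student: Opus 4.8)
\textbf{Proof proposal for Lemma \ref{lem-limit}.}

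The plan is to control each set by translating the condition on $\overline{\mathcal{EO}}$ into a condition on the representation function $A$ via Theorem \ref{thm-EO-A-mod4}, then invoke the structural characterisations of Theorem \ref{thm-A-parity} together with the counting estimate of Lemma \ref{lem-eq-gamma}. Recall $\overline{\mathcal{EO}}(n)\equiv A(6n+2)\pmod4$, so $\overline{\mathcal{EO}}(n)$ is odd exactly when $A(6n+2)$ is odd, and $\overline{\mathcal{EO}}(n)\equiv 2\pmod4$ exactly when $A(6n+2)\equiv 2\pmod 4$. Note also that $\overline{\mathcal{EO}}(n)=0$ whenever $n$ is odd (from $\overline{\mathcal{EO}}(2n+1)=0$ in the proof of Theorem \ref{thm-EO-A-mod4}), so in both cases we may restrict to even $n$, for which $6n+2\equiv 2\pmod{12}$ as required by Theorem \ref{thm-A-parity}.

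For the first bound, by Theorem \ref{thm-A-parity}(i) the quantity $A(6n+2)$ is odd if and only if $6n+2=2m^2$ with $(m,6)=1$, i.e. $3n+1=m^2$. Hence $n\le N$ with $\overline{\mathcal{EO}}(n)$ odd forces $m^2=3n+1\le 3N+1$, and distinct $n$ give distinct $m\ge 1$; counting the admissible $m$ crudely gives at most $\sqrt{3N+1}\le\sqrt{6N+1}$ such $n$. For the second bound, by Theorem \ref{thm-A-parity}(ii), $A(6n+2)\equiv 2\pmod4$ if and only if $6n+2=2p^{4\alpha+1}m^2$ with $p\equiv 5,7\pmod 8$ prime, $\alpha\ge 0$, $(m,6p)=1$; equivalently $3n+1=p^{4\alpha+1}m^2$ with $p\nmid m$. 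Thus every such $n\le N$ yields a representation $3n+1=m^2p^{4\alpha+1}$ with $p$ prime and $p\nmid m$, which is precisely the shape counted by $\gamma(N)$ in Lemma \ref{lem-eq-gamma} with $A=3$, $B=1$ (here $3N+1$ plays the role of the upper limit). Applying that lemma with $A=3$ gives $\frac{\pi^2}{6}(1+3^{-1})\frac{N}{\log N}+O(N/\log^2 N)=\frac{2\pi^2}{9}\frac{N}{\log N}+O(N/\log^2N)$, which for any fixed $c>1$ is at most $\frac{c\pi^2}{3}\frac{N}{\log N}$ once $N$ is large enough, since $\frac{2}{9}<\frac{1}{3}$.

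The main technical point — and the only place requiring care — is the precise bookkeeping in applying Lemma \ref{lem-eq-gamma}: one must check that the parametrisation from Theorem \ref{thm-A-parity}(ii) matches the defining sum of $\gamma$ exactly (every qualifying $n$ produces at least one, and only finitely many, valid triples $(m,p,\alpha)$, so overcounting is bounded and does not affect the stated inequality), and that replacing the upper limit $N$ by a constant multiple such as $3N+1$ only changes the main term by a factor absorbed into the constant $c$. The parity restriction to even $n$ and the congruence $6n+2\equiv 2\pmod{12}$ must also be verified so that Theorem \ref{thm-A-parity} genuinely applies. None of these steps is deep; the substantive content is entirely carried by Theorem \ref{thm-A-parity} and Lemma \ref{lem-eq-gamma}.
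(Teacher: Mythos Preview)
Your proposal is correct and follows essentially the same approach as the paper: reduce to the parity/residue of $A$ via Theorem~\ref{thm-EO-A-mod4}, invoke Theorem~\ref{thm-A-parity} to obtain the structural form of the relevant integers, and bound the resulting set using Lemma~\ref{lem-eq-gamma}. The only cosmetic difference is that the paper first passes to $n=2k$, writes the condition as $6k+1=m^2$ (resp.\ $6k+1=p^{4\alpha+1}m^2$), and applies Lemma~\ref{lem-eq-gamma} with $(A,B)=(6,1)$, obtaining the leading constant $\tfrac{\pi^2}{3}$ exactly; you instead keep the variable $n$, allow the harmless overcount over odd $n$, and apply the lemma with $(A,B)=(3,1)$, which yields the smaller constant $\tfrac{2\pi^2}{9}<\tfrac{\pi^2}{3}$ and is then absorbed into $c$. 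Your remark about ``replacing the upper limit $N$ by $3N+1$'' is unnecessary, since in Lemma~\ref{lem-eq-gamma} the parameter $N$ already bounds $n$ rather than $An+B$.
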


\begin{proof}
Using Theorem \ref{thm-EO-A-mod4}, we deduce that
\begin{align*}
\overline{\mathcal{EO}}(2n)=A(12n+2)~~~\quad \text{and}~~~\quad \overline{\mathcal{EO}}(2n+1)=0.
\end{align*}
By Theorem \ref{thm-A-parity}, we have
\begin{align*}
&\#\{n\leq N :A(12n+2)\equiv 1\pmod{2}\}\\
=&\#\{n\leq N :12n+2=2m^2~~ \text{for some integer $m$}\}\\
=&\#\{n\leq N :6n+1=m^2~~\text{for some integer $m$}\}\leq \sqrt {6N+1},
\end{align*}
and
\begin{align*}
&\#\{n\leq N :A(12n+2)\equiv 2\pmod{4}\}\\
=&\#\{n\leq N :12n+2=2p^{4\alpha+1}m^2~\text{for some integers $m$ and $\alpha\geq0$}\}\\
=&\#\{n\leq N :6n+1=p^{4\alpha+1}m^2 ~\text{for some integers $m$ and $\alpha\geq0$}\}.
\end{align*}
Let $(A,B)=(6,1)$ in Lemma \ref{lem-eq-gamma}. We obtain
\begin{align*}
\#\{n\leq N :\overline{\mathcal{EO}}(2n)\equiv 2\pmod{4}\}
=\frac{\pi^2}{3}\frac{N}{\log N}+O\left(\frac{N}{\log^2 N}\right).
\end{align*}
Hence if $N$ is sufficiently large, then
\begin{align*}
\#\{n\leq N :\overline{\mathcal{EO}}(2n)\equiv 2\pmod{4}\}
\leq \frac{c\pi^2}{3}\frac{N}{\log N}.
\end{align*}
Considering the parity of $n$, we find that
\begin{align*}
&\#\{n\leq N :\overline{\mathcal{EO}}(n)\equiv 1\pmod{2}\}\\
=&\#\{2k\leq N :\overline{\mathcal{EO}}(2k)\equiv 1\pmod{2}\}\\
\leq &\#\{k\leq N:\overline{\mathcal{EO}}(2k)\equiv 1\pmod{2}\}\leq \sqrt{6N+1},
\end{align*}
and
\begin{align*}
&\#\{n\leq N :\overline{\mathcal{EO}}(n)\equiv 2\pmod{4}\}\\
=&\#\{2k\leq N :\overline{\mathcal{EO}}(2k)\equiv 2\pmod{4}\}\\
\leq &\#\{k\leq N :\overline{\mathcal{EO}}(2k)\equiv 2\pmod{4}\}\\
\leq &\frac{c\pi^2}{3}\frac{N}{\log {N}}.
\end{align*}
\end{proof}

\begin{proof}[The proof of theorem \ref{thm-lim-mod4-0}]

Note that
\begin{align*}
&\#\{n\leq N :\overline{\mathcal{EO}}(n)\equiv 0\pmod{4}\}\\
=&N-\#\{n\leq N :\overline{\mathcal{EO}}(n)\equiv 1\pmod{2}\}-\#\{n\leq N :\overline{\mathcal{EO}}(n)\equiv 2\pmod{4}\}.
\end{align*}
By Lemma \ref{lem-limit}, we have
\begin{align*}
&\lim_{N\rightarrow \infty}\frac{\#\{n\leq N :\overline{\mathcal{EO}}(n)\equiv 0\pmod{4}\}}{N}\\
=&1-\lim_{N\rightarrow \infty}\frac{\#\{n\leq N :\overline{\mathcal{EO}}(n)\equiv 1\pmod{2}\}}{N}
-\lim_{N\rightarrow \infty}\frac{\#\{n\leq N :\overline{\mathcal{EO}}(n)\equiv 2\pmod{4}\}}{N}\\
=&1.
\end{align*}
We eventually arrive at the desired result.
\end{proof}

\subsection*{Acknowledgements}

The first author was supported by Shanghai Sailing Program (21YF1413600).

\end{document}